\theoremstyle{plain}
\newtheorem{theorem}{Theorem}[section]
\newtheorem*{Theorem B}{Theorem B}
\newtheorem*{Theorem A}{Theorem A}
\newtheorem{lemma}{Lemma}[section]
\newtheorem{definition}{Definition}[section]
\numberwithin{equation}{section}
\theoremstyle{remark}
\newtheorem{remark}{Remark}[section]
 \numberwithin{equation}{section}
\newtheorem*{question}{Question}
\def\<{\left < }
\def\>{\right >}
\def\({\left ( }
\def\){\right )}
\def\e{\eqref}
\def\x{{\bf x}}
\begin{document}

\vspace{2cm}

\title[Differential geometry of rectifying submanifolds]
{Differential geometry of rectifying submanifolds}

\author{ Bang-Yen Chen}
\address{\it Michigan State University \newline\indent
Department of Mathematics  \newline\indent
619 Red Cedar Road,  \newline\indent East Lansing, Michigan
48824--1027, U.S.A.}
\email{bychen@math.msu.edu}

\subjclass[2000]{53C40, 53C42, 53C50}
\keywords{Rectifying curve, rectifying submanifold.}

\date{}

\begin{abstract} A space curve in a Euclidean 3-space $\mathbb E^3$ is called a rectifying curve if its position vector field always lies in its rectifying plane. This notion of rectifying curves was introduced by the author in \cite{c6}.  
In this present article, we introduce and study the notion of rectifying submanifolds in Euclidean spaces. 
In particular, we prove that a Euclidean submanifold is rectifying if and only if the tangential component of its position vector field is a concurrent vector field.
Moreover, rectifying submanifolds with arbitrary codimension are completely determined.\end{abstract}

\maketitle

\section{Introduction}

Let $\mathbb E^3$ denote  Euclidean 3-space with its inner product $\left<\;\,,\;\right>$. Consider a  unit-speed space curve $x :  I\to \mathbb E^3$, where $I=(\alpha,\beta)$ is a real interval.
Let ${\bf x}$ denote the position vector field of $x$ and   ${\bf x}'$ be denoted by {\bf t}. 

It is possible, in general, that ${\bf t} '(s)=0$ for some $s$; however, we assume that this never happens. Then we can introduce a unique vector field ${\bf n}$ and positive function $\kappa$ so that ${\bf t}'=\kappa {\bf n}$. We call ${\bf t}'$ the {\it curvature vector field}, ${\bf n}$ the {\it principal normal vector field}, and $\kappa$ the {\it curvature} of the curve. Since ${\bf t}$ is of constant length, ${\bf n}$ is orthogonal to ${\bf t}$. The {\it binormal vector field} is defined by ${\bf b}={\bf t}\times{\bf n}$, which is a unit vector field  orthogonal to both ${\bf t}$ and ${\bf n}$. One defines the {\it torsion} $\tau$ by the equation ${\bf b}'=-\tau{\bf n}$. 

The famous Frenet-Serret equations are given by  
\begin{align}\label{E:S-F} &\begin{cases}{\bf t}'=\hskip.38in\kappa {\bf n}\\ {\bf n}'=-\kappa {\bf t} \hskip.3in+\tau{\bf b}  
\\{\bf b}'=\hskip.3in-\tau{\bf n}.\end{cases}\end{align}
At each point of the curve, the planes spanned by $\{{\bf t},{\bf n}\}$, $\{{\bf t},{\bf b}\}$, and $\{{\bf n},{\bf b}\}$
  are known as the {\it osculating plane}, the {\it rectifying plane}, and the {\it normal plane}, respectively. 

From elementary differential geometry it is well known that a curve in $\mathbb E^3$ lies in a plane if its position vector lies  in its osculating plane at each point, and lies on a sphere if its position vector lies in its normal plane at each point. 
In view of these basic facts, the author asked the following simple geometric question in \cite{c6}:

\begin{question} When does the position vector of a space curve $\hbox{\bf x}:  I\to \mathbb E^3$ always lie in its rectifying plane?
\end{question}

The author called such a curve  a {\it rectifying curve} in \cite{c6}. The author derived many fundamental properties of rectifying curves. In particular, he completely classifies all rectifying curves in \cite{c6}. It is known that rectifying curves related with the notions constant-ratio curves and convolution (cf. \cite{c2001,c2002a,c2002b,c2003a,c2003b}). Furthermore, the author and F. Dillen established in \cite{cd05} a  simple link between rectifying curves and the notion of centrodes in mechanics. Moreover, they showed in \cite{cd05} that rectifying curves are indeed the extremal
curves which satisfy the equality case of a general inequality. 
Since then rectifying curves have been studied by many authors, see \cite{Cam16,II2003,II2007,II2008,II2014,Lucas2015,O2009,Yi16,Yu2007} among many others. For the most recent survey on rectifying curves, see \cite{c16}.

In this article, we extend the notion of rectifying curves to the notion of rectifying submanifolds in a very natural way. Many fundamental properties of rectifying submanifolds are obtained. In particular, we prove that a Euclidean submanifold is rectifying if and only if the tangential component of its position vector field is a concurrent vector field.
Moreover, rectifying submanifolds with arbitrary codimension are completely determined.

\section{Preliminaries}

Let $x: M\to \mathbb E^m$ be an isometric immersion of a Riemannian manifold $M$ into the Euclidean $m$-space $\mathbb E^m$. For each point $p\in M$, we denote by
$T_pM$ and $T^\perp_p M$ the tangent and the normal spaces
at $p$. 

There is a natural orthogonal decomposition:
\begin{equation}\label{2.1} T_p{\mathbb E}^{m}=T_pM\oplus T^\perp_p
M.\end{equation}

Denote by $\nabla$ and $\tilde\nabla$ the Levi-Civita connections of $M$ and ${\mathbb E}^{m}$, respectively. The formulas of Gauss and Weingarten  are given respectively by (cf. \cite{cbook,book11})
\begin{align}\label{2.2}  &\tilde \nabla_XY=\nabla_X Y+h(X,Y),
\\& \label{2.3}   \tilde\nabla_X\xi =-A_\xi X+D_X\xi \end{align}  
for vector fields $X,\,Y$ tangent to  $M$ and  $\xi$ normal to $M$, where  $h$ is the second fundamental form, $D$ the normal connection, and $A$ the  shape operator of $M$. 

For a given point $p\in M$, the {\it first normal space}, of $M$ in $\mathbb E^m$, denoted by ${\rm Im}\,h_p$, is the subspace defined by
\begin{align} \label{2.4} {\rm Im}\,h_p ={\rm Span}\{h(X,Y):X,Y\in T_p M\}.\end{align}

For each normal vector $\xi$ at $p$, the shape operator $A_\xi$ is a self-adjoint endomorphism of $T_pM$. 
The second fundamental form $h$ and the shape operator $A$ are related by
\begin{equation}\label{2.5}  \<A_\xi X,Y\>=\<h(X,Y), \xi\>,\end{equation}  where $\<\;\, ,\;\>$
is the inner product on $M$ as well as on the ambient Euclidean space.

The  {\it equation of Gauss\/} of $M$ in $\mathbb E^m$ is given by
\begin{align}\label{2.6} R(X,Y;Z,W)=\<\sigma(X,W),\sigma(Y,Z)\>-\<\sigma(X,Z),\sigma(Y,W)\> \end{align}  for
$X,Y,Z,W$ tangent to $M$, where $R$ denotes the curvature tensors of $M$.

The covariant derivative ${\bar \nabla}h$ of $h$ with respect to the connection on
$TM \oplus T^{\perp}M$ is defined by
\begin{align}\label{2.7}({\bar\nabla}_{X}h)(Y,Z)=D_{X}(h (Y,Z))-h(\nabla_{X}Y,Z)-h(Y,\nabla_{X}Z).\end{align} 

The {\it equation of Codazzi\/} is
\begin{align}\label{2.8}({\bar\nabla}_{X}h)(Y,Z)= ({\bar\nabla}_{Y}h)(X,Z).\end{align} 

It follows from the definition of a rectifying curve $x:I\to \mathbb E^3$ that the position vector field $\x$ of $x$ satisfies
\begin{equation}\label{2.9} {\bf x}(s)=\lambda(s){\bf t}(s)+\mu(s){\bf b}(s)\end{equation}
for some functions $\lambda$ and $\mu$.  

For a curve  $x:I\to \mathbb E^3$ with $\kappa(s_0)\ne 0$ at  $s_0\in I$, the first normal space at $s_0$ is the line spanned by the principal normal vector ${\bf n}(s_0)$. 
Hence, the rectifying plane at $s_0$ is nothing but the plane orthogonal to the first normal space at $s_0$. Therefore, for a submanifold $M$ of $\mathbb E^m$ and  a point $p\in M$, we call the subspace of $T_p\mathbb E^m$, orthogonal complement to the first normal space ${\rm Im}\,\sigma_p$, the {\it rectifying space of} $M$ at $p$.

\begin{definition}\label{D:2.1} {\rm A submanifold $M$ of a Euclidean $m$-space $\mathbb E^m$ is called a} {\it rectifying submanifold} {\rm if the position vector field $\x$ of $M$ always lies in its rectifying space. In other words, $M$ is called a rectifying submanifold if and only if 
\begin{equation}\label{2.10} \<\x(p),{\rm Im}\, h_p\>=0\end{equation}
holds at every $p\in M$.}
\end{definition}

\begin{definition} {\rm A non-trivial vector field $Z$ on a Riemannian manifold $M$ is called a {\it concurrent vector field} if it satisfies
\begin{align}\label{2.11}\nabla_X Z=X\end{align}
for any vector  $X$ tangent to $M$, where $\nabla$ is the Levi-Civita connection of $M$.}
\end{definition}

\section{Lemmas}

 By a {\it cone\/} in $\mathbb E^m$ with vertex at the origin we mean a ruled submanifold generated by a family of lines  passing through the origin. A submanifold of $\mathbb E^m$ is called a {\it conic
submanifold\/} with vertex at the origin if it is an open portion of a cone with vertex at the origin. 

There exists a natural orthogonal decomposition of the position vector field $\x$ at each point for a Euclidean submanifold $M$; namely,
\begin{equation} \x=\x^T+\x^N,\end{equation} where $\x^T$ and
$\x^N$ denote the tangential and normal components of $\x$, respectively. Let $|\x^T|$ and $|\x^N|$
be the length of $\x^T$ and $\x^N$, respectively.

\begin{lemma} \label{L:3.1} Let $\,x:M\to {\mathbb E}^{m}$ be an isometric immersion of a Riemannian
$n$-manifold into the Euclidean $m$-space ${\mathbb E}^{m}$. Then $\x=\x^T$ holds identically if and only if  $M$ is a conic submanifold with the vertex at the origin. \end{lemma}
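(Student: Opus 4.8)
The plan is to prove both implications by understanding what the condition $\x = \x^T$ means geometrically, namely that the position vector is everywhere tangent to $M$. First I would observe that $\x = \x^T$ is equivalent to $\x^N = 0$, i.e. $\langle \x(p), \xi \rangle = 0$ for every normal vector $\xi$ at every $p \in M$. For the ``if'' direction, suppose $M$ is an open portion of a cone with vertex at the origin. Then through each point $p \in M$ there is a line $\ell$ through the origin lying in $M$, so $p$ itself (being on this line) is a scalar multiple of the direction vector of $\ell$; but that direction vector is tangent to $M$ at $p$, hence $\x(p) \in T_pM$, giving $\x = \x^T$.

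For the ``only if'' direction, assume $\x = \x^T$ identically. The key computation is to differentiate the position vector field. Using the formula of Gauss \eqref{2.2}, for any tangent vector field $X$ we have $\tilde\nabla_X \x = X$ (since in $\mathbb E^m$ the covariant derivative of the position vector field is the identity), and decomposing into tangential and normal parts via $\x = \x^T$ gives
\begin{align}\label{eq:L31aux}
\nabla_X \x^T = X, \qquad h(X, \x^T) = 0
\end{align}
for all $X$ tangent to $M$. In particular $\x^T$ is a concurrent vector field (when it is nontrivial), and more importantly $\nabla_X \x = X$. I would then fix $p \in M$ and consider the curve $\gamma(t)$ in $M$ obtained by following the flow of the position vector field, or more directly, consider the ray $t \mapsto tp$ for $t$ near $1$ and show it stays in $M$: letting $c(t)$ be the integral curve of $\x^T$ through $p$ reparametrized appropriately, the relation $\nabla_X \x^T = X$ forces $c$ to be (a reparametrization of) the straight line through the origin and $p$, because the Euclidean acceleration $\tilde\nabla_{c'} c'$ can be computed from \eqref{eq:L31aux} and shown to be parallel to $c'$, so the image of $c$ is a line segment; since this line segment passes through $p$ and, by the ODE, through the origin (or accumulates at it), $M$ contains an open piece of the line $\mathbb R p$ through each of its points, which is exactly the statement that $M$ is a conic submanifold with vertex at the origin.

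The main obstacle I anticipate is the last step: carefully showing that the integral curves of $\x^T$ are straight lines through the origin and that they genuinely lie in $M$ (not merely that $M$ looks conical infinitesimally). This requires translating the infinitesimal identity $\nabla_X \x^T = X$ together with $h(X,\x^T)=0$ into a statement about the $\mathbb E^m$-geodesic behavior of these curves — i.e. showing $\tilde\nabla_{\x^T}\x^T = \x^T$, so that the integral curve of $\x^T$, after reparametrization, is an affinely parametrized straight line — and then checking the boundary behavior at the origin to identify the line as passing through $0$. One must also handle the possibility that $\x^T$ vanishes at some points (i.e. $M$ passes through the origin), which should be dealt with by a limiting or continuity argument, or by noting that the set where $\x^T \neq 0$ is open and dense if $M$ is not contained in a single ray.
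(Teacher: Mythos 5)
Your proposal is correct and follows essentially the same route as the paper: both differentiate the position vector field via the Gauss formula, deduce $h(X,\x^T)=0$ and $\nabla_X\x^T=X$ (the paper phrases this as $\tilde\nabla_{e_1}e_1=0$ for $e_1=\x/|\x|$), and conclude that the relevant integral curves are straight lines whose direction at each point is the position vector itself, hence lines through the origin. The one simplification available to you is that an integral curve $c$ of $\x^T=\x$ satisfies the first-order ODE $c'(t)=c(t)$, so $c(t)=e^{t}c(0)$ lies on the ray from the origin through $c(0)$, which settles at once your worry about identifying the line as passing through the origin.
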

\begin{proof} Let $\,x:M\to {\mathbb E}^{m}$ be an isometric immersion of a Riemannian
$n$-manifold into the Euclidean $m$-space ${\mathbb E}^{m}$. If $\x=\x^T$ holds identically, then
$e_1=\x/|\x|$ is a unit vector field tangent  to $M$. 

Put $\x=\rho e_1$. Since $\tilde\nabla_{e_1}e_1$ is perpendicular to $e_1$, we find from
\begin{equation}\tilde\nabla_{e_1}\x=e_1,\;\; \tilde\nabla_{e_1}\x=(e_1\rho)e_1+\rho
\tilde\nabla_{e_1}e_1,\end{equation} 
that $\tilde\nabla_{e_1}e_1=0$. Therefore, the integral curves of $e_1$ are some open portions of generating lines in $\mathbb E^m$. Moreover,  because $\x=\x^T$,  the generating lines  given by the integral curves of $e_1$  pass through the origin. Consequently, $M$ is a conic submanifold with the vertex at the origin.

The converse is clear. \end{proof}

\begin{lemma} \label{L:3.2} Let $\,x:M\to {\mathbb E}^{m}$ be an isometric immersion of a Riemannian
$n$-manifold into the Euclidean $m$-space ${\mathbb E}^{m}$. Then $\x=\x^N$ holds identically if and only if  $M$ lies in a hypersphere centered at the origin. \end{lemma}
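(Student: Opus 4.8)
The plan is to characterize the condition $\x = \x^N$ by differentiating the squared length $|\x|^2 = \langle \x, \x \rangle$ along arbitrary tangent directions. First I would observe that $\x = \x^N$ holds identically if and only if the tangential component $\x^T$ vanishes identically, which is equivalent to saying that $\x$ is normal to $M$ at every point, i.e. $\langle \x, X \rangle = 0$ for every vector $X$ tangent to $M$.

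Next, for any tangent vector field $X$ on $M$, I would compute
\begin{equation}
X \langle \x, \x \rangle = 2 \langle \tilde\nabla_X \x, \x \rangle = 2 \langle X, \x \rangle = 2 \langle X, \x^T \rangle,
\end{equation}
using that $\tilde\nabla_X \x = X$ (the position vector field of a Euclidean submanifold has this property, exactly as used in Lemma~\ref{L:3.1}). Hence $\x = \x^N$, which says $\x^T = 0$, holds identically precisely when $X \langle \x, \x \rangle = 0$ for every tangent $X$, i.e. precisely when $|\x|^2$ is constant on each connected component of $M$. That constant is necessarily positive (the immersion cannot pass through the origin while being everywhere normal to $\x$, since $\x = 0$ would force $\x^N = 0$ too), so $|\x| = c$ for some constant $c > 0$; this is exactly the statement that $M$ lies in the hypersphere of radius $c$ centered at the origin.

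For the converse, if $M$ lies in a hypersphere $S^{m-1}(c)$ centered at the origin, then $\langle \x, \x \rangle = c^2$ is constant, so differentiating gives $\langle X, \x \rangle = 0$ for all tangent $X$, whence $\x^T = 0$ and $\x = \x^N$ identically. I do not anticipate a serious obstacle here; the only point requiring a little care is the remark that $|\x|$ is bounded away from zero so that "constant length" genuinely translates into "lies in a hypersphere centered at the origin" rather than the degenerate possibility of passing through the origin, and this is immediate since $\x = \x^N$ together with $\x = 0$ at some point would be fine pointwise but the constancy of $|\x|$ propagates the value, so either $|\x| \equiv 0$ (which is excluded, as then $M$ would be the single point at the origin, not an immersed manifold of positive dimension) or $|\x| \equiv c > 0$.
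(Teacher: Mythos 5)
Your proposal is correct and takes essentially the same route as the paper: one differentiates $\langle \x,\x\rangle$ using $\tilde\nabla_X\x=X$ and notes that the result vanishes exactly when $\x^T=0$, so the condition is equivalent to $|\x|$ being constant. The additional remarks on the positivity of the radius and the spelled-out converse are harmless refinements of what the paper dismisses as obvious.
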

\begin{proof}  Let $\,x:M\to {\mathbb E}^{m}$ be an isometric immersion of a Riemannian
$n$-manifold into the Euclidean $m$-space ${\mathbb E}^{m}$. If $\x=\x^N$ holds identically, then  we get $$Z\! \<\x,\x\>=2\<\right.\! \tilde\nabla_Z\x,\x\left.\!\>=2\<Z,\x^N\>=0$$ for any $Z\in TM$. Thus $M$ lies in a hypersphere centered at the origin. 

The converse is obvious.
\end{proof}

In views of Lemma \ref{L:3.1} and Lemma \ref{L:3.2} we make the following.

\begin{definition} {\rm A rectifying submanifold $M$ of $\mathbb E^m$ is called } {\it proper} {\rm if its position vector field $\x$ satisfies $\x\ne \x^T$ and $\x\ne \x^N$ at every point on $M$.}
\end{definition}

\begin{lemma} \label{L:3.3}  Let $M$ be a proper rectifying submanifold of $\mathbb E^m$ with $\dim M=n$. Then we have
\begin{align}\label{3.5} m> n+\dim\, ({\rm Im}\,h_p)\end{align} for each $p\in M$. \end{lemma}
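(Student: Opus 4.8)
The plan is to reduce the claimed strict inequality to an elementary dimension count inside the normal space $T_p^\perp M$, using the rectifying condition together with the properness hypothesis. First I would recall that, by Definition~\ref{D:2.1}, being rectifying means $\<\x(p),{\rm Im}\,h_p\>=0$ at every $p\in M$, and that ${\rm Im}\,h_p$ is a subspace of the normal space $T_p^\perp M$, which has dimension $m-n$ by the orthogonal decomposition \eqref{2.1}.

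Next I would split the position vector as $\x=\x^T+\x^N$ with $\x^T\in T_pM$ and $\x^N\in T_p^\perp M$. Since $\x^T$ is orthogonal to all of $T_p^\perp M$, and in particular to ${\rm Im}\,h_p$, the rectifying condition $\<\x,{\rm Im}\,h_p\>=0$ is equivalent to $\<\x^N,{\rm Im}\,h_p\>=0$. Thus $\x^N$ is a normal vector lying in the orthogonal complement of ${\rm Im}\,h_p$ inside $T_p^\perp M$.

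The crucial observation is then that $\x^N\neq 0$: since $M$ is a \emph{proper} rectifying submanifold, $\x\neq\x^T$ at $p$, hence $\x^N\neq 0$. A nonzero vector orthogonal to a subspace cannot belong to that subspace (otherwise it would be orthogonal to itself), so $\x^N\notin{\rm Im}\,h_p$. Therefore ${\rm Im}\,h_p$ together with $\x^N$ spans a subspace of $T_p^\perp M$ of dimension $\dim({\rm Im}\,h_p)+1$, which forces $\dim({\rm Im}\,h_p)+1\le m-n$, that is, $m>n+\dim({\rm Im}\,h_p)$.

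There is no serious obstacle here: once properness is invoked to guarantee $\x^N\neq 0$, the rest is linear algebra in $T_p^\perp M$. The only point deserving a word of care is the verification $\x^N\notin{\rm Im}\,h_p$, which is immediate from orthogonality; and one should emphasize that the properness hypothesis is genuinely needed, in the same spirit that Lemmas~\ref{L:3.1} and~\ref{L:3.2} isolate precisely the excluded extremal cases $\x=\x^T$ and $\x=\x^N$.
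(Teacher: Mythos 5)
Your proof is correct and rests on the same idea as the paper's: properness forces $\x^N\neq 0$, and the rectifying condition makes $\x^N$ a nonzero normal vector orthogonal to ${\rm Im}\,h_p$, so ${\rm Im}\,h_p$ cannot exhaust $T_p^\perp M$. The paper merely phrases this contrapositively (if $m=n+\dim({\rm Im}\,h_p)$ then $\x=\x^T$, contradicting properness), so your direct dimension count is essentially the same argument.
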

\begin{proof}  Let $M$ be a proper rectifying submanifold of $\mathbb E^m$. If $m=n+\dim\, ({\rm Im}\,h_p)$, then we get $\x=\x^T$ which is a contradiction.
\end{proof}

\begin{remark} In views of Lemma \ref{L:3.1} and Lemma \ref{L:3.2}, we are only interested on proper rectifying submanifolds.\end{remark}

\section{Characterization and classification of  rectifying submanifolds}

First, we give the following simple characterization of rectifying submanifolds.

\begin{theorem} \label{T:4.1}  If the position vector field $\x$ of a submanifold $M$  in ${\mathbb E}^{m}$ satisfies $\x^N\ne 0$, then $M$ is a proper rectifying submanifold  if and only if $\x^T$ is concurrent vector field on $M$.
\end{theorem}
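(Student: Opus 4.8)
The plan is to unwind both implications using the Gauss and Weingarten formulas applied to the splitting $\x = \x^T + \x^N$. The key computational identity is that for any tangent vector field $X$ on $M$, differentiating the position vector gives $\tilde\nabla_X \x = X$, since $\x$ is literally the position vector field in $\mathbb E^m$. Writing $\x = \x^T + \x^N$ and applying Gauss's formula \eqref{2.2} to the tangential part and Weingarten's formula \eqref{2.3} to the normal part, I would compare tangential and normal components of
\[
X = \tilde\nabla_X \x^T + \tilde\nabla_X \x^N = \bigl(\nabla_X \x^T - A_{\x^N} X\bigr) + \bigl(h(X,\x^T) + D_X \x^N\bigr).
\]
Separating components yields the two basic equations
\begin{align*}
\nabla_X \x^T - A_{\x^N} X &= X,\\
h(X,\x^T) + D_X \x^N &= 0.
\end{align*}
These hold for \emph{every} Euclidean submanifold, with no rectifying hypothesis, so they are the workhorse of the argument.

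For the forward direction, assume $M$ is a proper rectifying submanifold with $\x^N \neq 0$. By Definition \ref{D:2.1}, $\langle \x, \mathrm{Im}\, h_p\rangle = 0$, i.e. $\langle \x^N, h(X,Y)\rangle = 0$ for all tangent $X,Y$; by \eqref{2.5} this says precisely that $A_{\x^N} = 0$. Substituting into the first component equation above immediately gives $\nabla_X \x^T = X$ for all $X$, which is the defining property \eqref{2.11} of a concurrent vector field; one should note $\x^T$ is non-trivial because $M$ is proper (so $\x \neq \x^N$), making it a genuine concurrent vector field and not the zero field. For the converse, suppose $\x^N \neq 0$ and $\x^T$ is concurrent, so $\nabla_X \x^T = X$ for all $X$. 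Comparing with the first component equation forces $A_{\x^N} X = 0$ for all $X$, hence $A_{\x^N} = 0$, and then \eqref{2.5} gives $\langle h(X,Y), \x^N\rangle = 0$, i.e. $\langle \x^N, \mathrm{Im}\,h_p\rangle = 0$. Since $\x^T$ is tangent it is automatically orthogonal to the (normal) subspace $\mathrm{Im}\,h_p$, so $\langle \x, \mathrm{Im}\,h_p\rangle = 0$ and $M$ is rectifying by Definition \ref{D:2.1}; it is proper because $\x^N \neq 0$ rules out $\x = \x^T$, and $\x^T$ being a nontrivial (concurrent) field rules out $\x = \x^N$.

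The main subtlety — more a bookkeeping point than a genuine obstacle — is getting the equivalence $\langle \x^N, \mathrm{Im}\,h_p\rangle = 0 \iff A_{\x^N} = 0$ exactly right and making sure the "proper" and "non-trivial vector field" qualifiers are honestly checked on both sides, since the theorem is an iff between \emph{proper} rectifying and \emph{concurrent} (which by definition excludes the zero field). The hypothesis $\x^N \neq 0$ is exactly what is needed to exclude the degenerate case $\x = \x^T$ (Lemma \ref{L:3.1}'s conic submanifolds, where $\x^T$ trivially satisfies $\tilde\nabla_{e_1}\x = e_1$ but the first normal space need not be orthogonal to $\x$ — actually there $\x^N=0$ so the first-normal-space condition is vacuous, which is precisely why that case is excluded from "proper"). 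No deep tool beyond the Gauss–Weingarten formulas is required; the proof is essentially a two-line component computation framed by careful attention to the definitions.
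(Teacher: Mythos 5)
Your proposal is correct and follows essentially the same route as the paper: differentiate the position vector, split via Gauss--Weingarten into tangential and normal components, and use $A_{\x^N}=0 \Leftrightarrow \<\x,\mathrm{Im}\,h_p\>=0$ to pass between the rectifying condition and $\nabla_X\x^T=X$. Your extra care in checking that $\x^T$ is non-trivial and that both parts of the ``proper'' condition hold is a welcome tightening of details the paper treats more briefly.
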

\begin{proof}
Let  $x:M\to {\mathbb E}^{m}$ be an isometric immersion of a Riemannian $n$-manifold into the Euclidean $m$-space ${\mathbb E}^{m}$. Consider the orthogonal decomposition 
\begin{align}\label{4.1}  \x=\x^T +\x^N\end{align}
of the position vector field $\x$ of $M$ in $\mathbb E^m$.  

From \e{4.1} and formulas of Gauss and Weingarten,  we find
\begin{align}\label{4.2}  Z=\tilde\nabla_Z \x=\nabla_Z \x^T +h(Z,\x^T)-A_{\x^N}Z +D_Z\x^N\end{align}
for any $Z\in TM$. After comparing the tangential components in \e{4.2}, we obtain
\begin{align}\label{4.3}  A_{\x^N}Z=\nabla_Z \x^T - Z.\end{align}

Assume that $M$ is a proper rectifying submanifold. Then we have $\x^T\ne 0$ and $\x^N\ne 0$ . Moreover, it follows from the Definition \ref{D:2.1} that
\begin{align}\label{4.4}  \<\x, h(X,Y)\>=0\end{align}
for $X,Y\in TM$. So we get $A_{\x^N}=0$. Hence, we obtain from \e{4.3} that 
\begin{align}\label{4.5}  \nabla_Z \x^T = Z,\end{align}
which shows that $\x^T$ is a concurrent vector field on $M$.

Conversely, if $\x^T$ is a concurrent vector field on $M$, then we find from \e{2.11} and \e{4.3} that $A_{\x^N}=0$. Therefore we obtain \e{4.4}.  Consequently, $M$ is a proper rectifying submanifold due to $\x^N\ne 0$ by assumption.
\end{proof}
 
Next, we give the following classification of rectifying submanifolds.

\begin{theorem}\label{T:4.2} If $M$ is a proper rectifying submanifold of ${\mathbb E}^{m}$, then with respect to some suitable local coordinate systems $\{s,u_2,\ldots,u_n\}$ on $M$  the immersion $\,x$ of $M$ in $\mathbb E^m$  is of the form
\begin{equation}\label{4.6} x(s,u_2,\ldots,u_n)=\sqrt{s^2+c^2}\,
Y(s,u_2,\ldots,u_n),\;\; \<Y,Y\>=1,\; c>0,\end{equation} such that the metric tensor $g_Y\! $ of the spherical submanifold defined by $Y$ satisfies
\begin{align}\label{4.7}  g_Y=\frac{c^2}{(s^2+c^2)^2}ds^2+\frac{s^2}{s^2+c^2}\sum_{i,j=2}^n g_{ij}(u_2,\ldots,u_n) du_i du_j.\end{align}

Conversely,  the immersion given by \e{4.6}-\e{4.7} defines a proper rectifying submanifold.
\end{theorem}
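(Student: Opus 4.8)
The plan is to use Theorem~\ref{T:4.1} together with Lemma~\ref{L:3.1} and Lemma~\ref{L:3.2} to reduce the statement to a concrete analysis of a submanifold carrying a concurrent vector field. Suppose $M$ is a proper rectifying submanifold. By Theorem~\ref{T:4.1}, $\x^T$ is a concurrent vector field, so $\nabla_Z\x^T=Z$ for all $Z\in TM$; in particular $\x^T$ is nowhere zero (properness). Set $f=|\x^T|$ and $e_1=\x^T/f$. A direct computation from $\nabla_Z\x^T=Z$ shows that $Z\,f^2=2\<\x^T,Z\>$, hence $\mathrm{grad}\,(f^2/2)=\x^T$, so $f$ depends only on the flow parameter of $e_1$; choosing an arc-length parameter $s$ along the integral curves of $e_1$ one finds $f\,f'=f$ away from zeros, giving $f=s$ up to translation (so $s>0$ on the relevant piece), and $e_1=\partial/\partial s$. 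Since $\x^N$ is parallel in the normal bundle along such a submanifold? — more directly, $|\x|^2=|\x^T|^2+|\x^N|^2=s^2+|\x^N|^2$, and $Z\<\x,\x\>=2\<Z,\x^T\>$ shows $|\x^N|$ is constant on the integral curves of any vector field orthogonal to $\x^T$; combined with the previous computation one gets $|\x^N|^2=c^2$ for a positive constant $c$ (positivity by properness, $\x\ne\x^T$). Therefore $|\x|^2=s^2+c^2$, which is exactly the radial factor in \e{4.6}.

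Next I would write $\x=\sqrt{s^2+c^2}\,Y$ with $\<Y,Y\>=1$, so that $Y$ maps $M$ into the unit hypersphere of $\mathbb E^m$; this defines the spherical submanifold referred to in the statement. To get \e{4.7} I would compute the induced metric $g=\<dx,dx\>$ of $M$ in terms of $s$, $Y$, and $dY$. Writing $x=r(s)Y$ with $r=\sqrt{s^2+c^2}$, we have $dx=r'\,ds\,Y+r\,dY$, and since $\<Y,dY\>=0$ this gives $g=(r')^2\,ds^2+r^2\,\<dY,dY\>=(r')^2\,ds^2+r^2 g_Y$. On the other hand, using the product structure coming from $e_1=\partial/\partial s$: because $e_1$ is a unit concurrent direction, $\tilde\nabla_{e_1}e_1=0$ (the integral curves are lines through the origin, by the argument in Lemma~\ref{L:3.1} applied to $\x^T$ alone), and the distribution orthogonal to $e_1$ integrates to a family of submanifolds; one shows $g=ds^2+s^2\,\bar g$ where $\bar g=\sum_{i,j=2}^n g_{ij}(u_2,\dots,u_n)\,du_i\,du_j$ is a metric not depending on $s$ (this is the standard local normal form of a metric admitting a concurrent vector field — a warped-product cone structure). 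Comparing the two expressions for $g$, namely $ds^2+s^2\bar g=(r')^2\,ds^2+r^2 g_Y$, and using $r^2=s^2+c^2$, $(r')^2=s^2/(s^2+c^2)$, one solves for $g_Y$ and obtains precisely \e{4.7}.

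For the converse, I would start from an immersion of the form \e{4.6} with $Y$ spherical and $g_Y$ of the form \e{4.7}, compute the induced metric of $x$ as above to confirm it is the cone metric $ds^2+s^2\bar g$, hence that $e_1=\partial/\partial s$ is a unit vector field with $x=\sqrt{s^2+c^2}\,Y$ and $\<x,e_1\>=\<\,r Y,\;r' Y+r\,Y_s\,\>/|e_1|$ — more simply, one checks directly that $\x^T=\<\x,e_1\>e_1$ equals $s\,\partial/\partial s$ (using $\<\x,\partial/\partial u_i\>=0$, which follows from $\<Y,Y\>=1$ and the block form of $g_Y$, so that $\x$ has no component along the $u_i$-directions), and that $\nabla_Z(s\,\partial/\partial s)=Z$ because the connection of the cone metric $ds^2+s^2\bar g$ satisfies $\nabla_{\partial_s}(s\partial_s)=\partial_s$ and $\nabla_{\partial_{u_i}}(s\partial_s)=s\cdot\frac1s\partial_{u_i}=\partial_{u_i}$. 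Thus $\x^T$ is concurrent, and since $|\x^N|^2=|\x|^2-|\x^T|^2=(s^2+c^2)-s^2=c^2>0$ we have $\x^N\ne0$, so Theorem~\ref{T:4.1} yields that $M$ is a proper rectifying submanifold.

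The main obstacle I expect is establishing the warped-product normal form $g=ds^2+s^2\bar g$ with $\bar g$ independent of $s$ — i.e.\ proving that the orthogonal distribution to the concurrent direction is integrable and that its leaves are homothetic as $s$ varies. This is where the concurrency equation $\nabla_Z\x^T=Z$ must be exploited carefully: one shows $e_1=\x^T/s$ satisfies $\nabla_Z e_1=\tfrac1s(Z-\<Z,e_1\>e_1)$, from which both $\tilde\nabla_{e_1}e_1=0$ and the fact that $\mathcal L_{e_1}$ of the orthogonal metric is $\tfrac2s$ times that metric follow; integrating the latter along the $e_1$-flow gives the $s^2$ warping factor. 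Once this structural lemma is in hand, matching it against the explicit radial function $r(s)=\sqrt{s^2+c^2}$ to extract \e{4.7} is routine algebra.
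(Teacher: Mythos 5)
Your proposal is correct and reaches the theorem along essentially the same skeleton as the paper (concurrency of $\x^T$ via Theorem \ref{T:4.1}, then $|\x^T|=s$, $|\x^N|=c$ constant, the cone metric $ds^2+s^2\bar g$, and finally comparison with $x=\sqrt{s^2+c^2}\,Y$ to extract \eqref{4.7}; the converse by direct computation is the same in both). The genuine difference is in the key structural step and in a few supporting derivations. The paper proves that the foliation by integral curves of $e_1$ is totally geodesic and that the orthogonal distribution $\mathcal D$ is a spherical foliation (totally umbilical leaves with mean curvature $1/s$ and parallel mean curvature vector), and then invokes Hiepko's warped-product theorem \cite{H} to obtain $g=ds^2+s^2g_F$; you instead derive $\nabla_Z e_1=\tfrac1s\bigl(Z-\langle Z,e_1\rangle e_1\bigr)$ from concurrency and integrate $\mathcal L_{e_1}\bigl(g|_{\mathcal D}\bigr)=\tfrac2s\,g|_{\mathcal D}$ along the gradient flow of $s$, which gives the same normal form by an elementary, self-contained computation and avoids citing Hiepko. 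Likewise, the paper gets constancy of $|\x^N|$ from the normal part of the Gauss--Weingarten equation ($D_Z\x^N=-h(Z,\x^T)$ together with the rectifying condition), and integrability of $\mathcal D$ from the symmetry of $h$, whereas you get both from the concurrency equation and the fact that $\x^T=\mathrm{grad}\,(|\x^T|^2/2)$; all of these variants are sound.

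One aside in your write-up is wrong, though harmless: you claim $\tilde\nabla_{e_1}e_1=0$, ``the integral curves are lines through the origin, by the argument in Lemma \ref{L:3.1} applied to $\x^T$ alone.'' That argument requires $\x=\x^T$, which fails for a proper rectifying submanifold; in general $\tilde\nabla_{e_1}e_1=h(e_1,e_1)\neq0$ (only its $\x^N$-component vanishes, since $A_{\x^N}=0$), and already for a rectifying curve in $\mathbb E^3$ the integral curve of $e_1$ is the curve itself, not a line. What your formula for $\nabla_Z e_1$ actually yields, and all your argument needs, is the intrinsic statement $\nabla_{e_1}e_1=0$, so the proof goes through once $\tilde\nabla$ is replaced by $\nabla$ there and the parenthetical justification is dropped.
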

\begin{proof} Let  $x:M\to {\mathbb E}^{m}$ be an isometric immersion of a Riemannian $n$-manifold $M$ into the Euclidean $m$-space ${\mathbb E}^{m}$. Assume that $M$ is a proper rectifying submanifold.  Then \e{4.2} holds.

After comparing  the normal components of \e{4.2}, we obtain
\begin{align}\label{4.8} &D_Z\x^N=-h(Z,\x^T),\end{align}
for $Z\in TM$.

It follows from \e{4.4} and \e{4.8} that $\<\x,D_Z \x^N\>=0$. Hence we get $$Z\! \<\x^N,\x^N\>=0,$$ which implies that $\x^N$ is of positive constant length, say $c$.  
 From \e{4.4} we  obtain
\begin{equation}\label{4.11} \<A_{\x^N} X,Y\>=\<\x^N,h(X,Y)\>=\<\x,h(X,Y)\>=0.\end{equation}
Hence we have $A_{\x^N}=0$. 
Let us put $\rho=|\x^T|$ and $e_1=\x^T/\rho$. We may extend $e_1$ to a local orthonormal frame $e_1,\ldots,e_n$. 

We put
\begin{equation}\label{4.9} \nabla_X e_i=\sum_{j=1}^n \omega_i^j(X)e_j,\;\; i=1,\ldots,n.\end{equation} 
For  $j,k=2,\ldots,n$, we find
\begin{equation}\label{4.10} 0=e_k\! \<\x,e_j\>=\delta_{jk}+ \<\x,\nabla_{e_k}e_j\>+\<\x,h(e_j,e_k)\>,\end{equation} 
Since $h(e_j,e_k)=h(e_k,e_j)$, equation \e{4.10} gives 
 $$\omega^1_j(e_k)=\omega^1_k(e_j),\;\; j,k=2,\ldots,n.$$ 
 Hence, it follows from the Frobenius theorem that the distribution $\mathcal D$
spanned by $e_2,\ldots,e_n$ is an integrable distribution. 

On the other hand, the distribution $\mathcal D^\perp={\rm Span}\,\{e_1\}$ is also integrable since it is of rank one. Therefore, there exist local coordinate systems $\{s,u_2,\ldots,u_n\}$ on $M$ such that $e_1=\partial/\partial s$ and $\partial/\partial u_2,\ldots,\partial/\partial u_n$ span the distribution $\mathcal D$.

 Let us put 
\begin{equation}\label{4.12}\x^T=\varphi e_1\end{equation} with $\varphi=|\x^T|$.  
By taking the derivative of $\varphi=\<\x,e_1\>$ with respect to $e_j$ for $j=1,\ldots,n$, we also have
\begin{equation}\label{4.13}e_j\varphi=\delta_{1j}+\<\x,h(e_1,e_j)\>.\end{equation}
Combining \e{4.4} and \e{4.13} gives
\begin{equation}\label{4.14}e_j\varphi=\delta_{1j},\quad j=1,\ldots,n.\end{equation} 
Therefore, we obtain $\varphi=\varphi(s)$ and $\varphi'(s)=1$ which imply
$\varphi(s)=s+b$ for some constant $b$. Thus, after applying a suitable translation on $s$ if necessary,
we have $\varphi=s$. Consequently, the position
vector field satisfies  \begin{equation}\label{4.15}\x=se_1+\x^N.\end{equation} 
By combining \e{4.15} and $|\x^N|=c$, we find 
\begin{equation}\label{4.16}\<\x,\x\>= s^2+c^2, \end{equation}
where $c$ is a positive number. Hence we may put
\begin{equation}\label{4.17} x(s,u_2,\ldots,u_n)=\sqrt{s^2+c^2}\,
Y(s,u_2,\ldots,u_n),\end{equation} for some $\mathbb E^m$-valued function
$Y=Y(s,u_2,\ldots,u_n)$ satisfying $\<Y,Y\>=1$. 

Using \e{4.17} and the fact that $e_1=\partial/\partial s$ is orthogonal to the distribution $\mathcal D$, we obtain that
\begin{equation}\label{4.18} \<Y_s,Y_s\>={c^2\over {(s^2+c^2)^2}},\;\; \<Y_s,Y_{u_j}\>=0, \;\; j=2,\ldots,n.\end{equation}
Therefore, the metric tensor $g_Y$ of the spherical submanifold defined by $Y$ takes the following form:
\begin{align}\label{4.19}  g_Y=\frac{c^2}{(s^2+c^2)^2}ds^2+\sum_{i,j=2}^n g_{ij}(s,u_2,\ldots,u_n) du_i du_j.\end{align}

On the other hand, it follows from Theorem \ref{T:4.1} that $\x^T=se_1$ is a concurrent vector field. Thus, we find from \e{4.5} that 
\begin{equation}\begin{aligned}\label{4.20} e_1= \nabla_{e_1}\x^T=\nabla_{e_1} se_1=e_1+s\nabla_{e_1}e_1.\end{aligned}\end{equation} Hence we get $\nabla_{e_1}e_1=0$, which implies that the integral curves of $e_1$ are geodesic in $M$. Therefore, the distribution $\mathcal D^\perp$ spanned by $e_1$ is a totally geodesic foliation. 

From \e{4.5} we have
\begin{equation} e_i= \nabla_{e_i} \x^T=s\nabla_{e_i}e_1, \;\; i=2,\ldots,n,\end{equation}
which implies that
\begin{equation} \label{4.22}\omega^j_1(e_i)=\frac{\delta_{ij}}{s}, \;\; i,j=2,\ldots,n,\end{equation}
where $\delta_{ij}=1$ or 0 depending on $i=j$ or $i\ne j$. 

From \e{4.22} we conclude that $\mathcal D$ is an integrable distribution whose leaves are totally umbilical  in $M$. Moreover, the mean curvature of leaves of $\mathcal D$  are given by $s^{-1}$. Since the leaves of $\mathcal D$ are hypersurfaces in $M$, it follows that the mean curvature vector fields of leaves of $\mathcal D_2$ are parallel in the normal bundle of $M$ in $\mathbb E^m$. Therefore, $\mathcal D$ is a spherical foliation. Consequently, by a result of \cite{H} (or Theorem 4.4 of \cite[page 90]{book11}) we conclude that $M$ is locally a warped product $I\times_{s}  F$, where  $F$ is a Riemannian $(n-1)$-manifold. Thus, the metric tensor $g$ of $M$ takes the form 
\begin{equation} \label{4.23}g=ds^2+s^2 g_F,\end{equation}
where $g_F$ is the metric tensor of $F$.
Now, by applying \e{4.6}, \e{4.19} and \e{4.23}, we may conclude that  the metric tensor $g_Y$  can  be expressed as \e{4.7}.

Conversely, let us consider a submanifold $M$ of $\mathbb E^m$ defined by 
\begin{equation}\label{4.24} x(s,u_2,\ldots,u_n)=\sqrt{s^2+c^2}\,
Y(s,u_2,\ldots,u_n),\;\; \<Y,Y\>=1,\; c>0,\end{equation} such that the metric tensor $g_Y\! $  satisfies
\begin{align}\label{4.25}  g_Y=\frac{c^2}{(s^2+c^2)^2}ds^2+\frac{s^2}{s^2+c^2}\sum_{i,j=2}^n g_{ij}(u_2,\ldots,u_n) du_i du_j.\end{align} 
Then it follows from \e{4.24} that
\begin{equation}\begin{aligned}\label{4.26} &\frac{\partial \x}{\partial s}=\frac{sY}{\sqrt{s^2+c^2}}+\sqrt{s^2+c^2}\, Y_s,\;\; 
\\&\frac{\partial \x}{\partial u_j}=\sqrt{s^2+c^2}\,
Y_{u_j},\;\; j=2,\ldots,n, \end{aligned}\end{equation}
where $Y_s=\partial Y/\partial s$ and $Y_{u_j}=\partial Y/\partial u_j$.
It follows from \e{4.24}, \e{4.25} and \e{4.26} that the metric tensor $g_M$ of $M$ is given by
\begin{equation} \label{4.27}g_M=ds^2+s^2 \sum_{i,j=2}^n g_{ij}(u_2,\ldots,u_n) du_i du_j. \end{equation}

Now, by an easy computation, we find from \e{4.27} that
\begin{equation} \label{4.28}\nabla_{\frac{\partial}{\partial s}}\frac{\partial}{\partial s}=0, \;\; \nabla_{\frac{\partial}{\partial u_j}} \frac{\partial}{\partial s}=\frac{1}{s}\frac{\partial}{\partial u_j},\;\; j=2,\ldots,n.\end{equation}

Since $\<Y,Y\>=1$,  \e{4.24} and \e{4.26} imply that
\begin{equation} \label{4.29} \<\right.\! \x, \x_{u_j}\! \left.\>=0, \;\;  j=2,\ldots,n.\end{equation}
Therefore, we obtain $\x^T=s \frac{\partial}{\partial s}$. Now, it is easy to verify that $\x^T$ is a concurrent vector field on $M$. Moreover, it is direct to show that the normal component of $\x$ is given by
$$\x^N=\frac{c^2}{\sqrt{s^2+c^2}}\,Y-s\sqrt{s^2+c^2}\, Y_s, $$
which is alway non-zero everywhere on $M$. 
Consequently, $M$ is a proper rectifying submanifold, according to Theorem \ref{T:4.1}.
\end{proof}

\begin{remark} Theorem \ref{T:4.2} extend  Theorem 3 of \cite{c6}.
\end{remark}

\begin{remark} If we put $s=\tan^{-1}\(\frac{t}{c}\)$, then \e{4.7} becomes 
\begin{align}\label{4.30} g_Y=dt^2+\sin^2t \sum_{j,k=2}^n g_{jk}(u_2,\ldots,u_n)du_j du_k.\end{align}
For $n=2$, we get $g_Y=dt^2+(\sin^2t) du^2$ from \e{4.30}, which is the metric tensor of a spherical coordinate system $(t,u)$ on $S^2(1)$.  Hence, for $n=2$, $Y=Y(t,u)$ is nothing but an isometric immersion from an open portion of  $S^1(1)$ into $S^{m-1}(1)\subset \mathbb E^m$.
Therefore, there exist many spherical submanifolds in $\mathbb E^m$ whose metric tensor is given by \e{4.7}. Consequently, there exist many  rectifying submanifolds in $\mathbb E^m$ according to Theorem \ref{T:4.2}.
\end{remark}

\section{Some properties of rectifying submanifolds}

Finally, we provide some basic properties of proper rectifying submanifolds.

\begin{theorem}\label{T:5.1} Let  $M$ be a proper rectifying submanifold of ${\mathbb E}^{m}$.  Then 
\begin{itemize}
\item[(a)] $|\x^T|=s+b$ for some constant $b$.

\item[{(b)}] $|\x|^2=s^2+c_1 s+c_2$ for some constants $c_1$ and $c_2$.

\item[(c)] $\x^N$ is of constant length.

\item[(d)] $A_{\x^N}=0$.

\item[(e)]  The curvature tensor $R$ satisfies $R(\x^T,Y)=0$ for any $Y \in TM$.

\item[(f)] The sectional curvature $K$ of $M$ satisfies $K(\x^T,Z)=0$ for any unit vector $Z$ perpendicular to $\x^T$.
\end{itemize}
\end{theorem}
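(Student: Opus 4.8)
The plan is to collect the six assertions; most were in effect already obtained on the way to Theorem~\ref{T:4.2}, so the task is mainly to choose a convenient order and fill in the short arguments. I would settle (d) first: by the defining condition \e{2.10}, $\<\x,h(X,Y)\>=0$ for all $X,Y\in TM$, and since $h(X,Y)$ is normal this equals $\<\x^N,h(X,Y)\>=\<A_{\x^N}X,Y\>$, whence $A_{\x^N}=0$. Next (c): applying $\tilde\nabla_Z$ to $\x=\x^T+\x^N$ and using the formulas of Gauss and Weingarten, the normal part of the identity $\tilde\nabla_Z\x=Z$ reads $D_Z\x^N=-h(Z,\x^T)$; hence $Z\<\x^N,\x^N\>=-2\<h(Z,\x^T),\x^N\>=-2\<h(Z,\x^T),\x\>=0$ by \e{2.10}, so $|\x^N|$ is a constant, say $c>0$ (positive because $M$ is proper, so $\x^N\ne0$).

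For the remaining items I would invoke Theorem~\ref{T:4.1}, which applies because $\x^N\ne0$: the tangential field $\x^T$ is concurrent, i.e. $\nabla_X\x^T=X$ for all $X\in TM$. Since $\x^T\ne0$ on a proper rectifying submanifold, put $\varphi=|\x^T|$ and $e_1=\x^T/\varphi$. From $\varphi^2=\<\x^T,\x^T\>$ and concurrency, $2\varphi\,(X\varphi)=2\<X,\x^T\>$, i.e. $X\varphi=\<X,e_1\>$; thus $e_1\varphi=1$ and $X\varphi=0$ for every $X\perp e_1$. Exactly as in the proof of Theorem~\ref{T:4.2}, the line field spanned by $e_1$ and the distribution orthogonal to it are both integrable, so there are local coordinates $\{s,u_2,\ldots,u_n\}$ with $\partial/\partial s=e_1$; then $\varphi$ is a function of $s$ alone with $\varphi'(s)=1$, giving $\varphi=s+b$, which is (a). Then (b) follows from $|\x|^2=|\x^T|^2+|\x^N|^2=(s+b)^2+c^2=s^2+c_1s+c_2$ with $c_1=2b$ and $c_2=b^2+c^2$; equivalently, one may differentiate $\<\x,\x\>$ directly, using $\tilde\nabla_Z\x=Z$ and $\x^T=\varphi e_1$, to get $e_1\<\x,\x\>=2(s+b)$ and $X\<\x,\x\>=0$ for $X\perp e_1$.

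Finally (e): using only that $\nabla$ is torsion-free together with $\nabla_X\x^T=X$, one computes
\[
R(X,Y)\x^T=\nabla_X\nabla_Y\x^T-\nabla_Y\nabla_X\x^T-\nabla_{[X,Y]}\x^T=\nabla_XY-\nabla_YX-[X,Y]=0
\]
for all $X,Y\in TM$. The pair symmetry $\<R(\x^T,Y)Z,W\>=\<R(Z,W)\x^T,Y\>$ of the Riemann tensor then forces $R(\x^T,Y)Z=0$ for all $Y,Z$, i.e. $R(\x^T,Y)=0$, which is (e). Assertion (f) is immediate: for a unit $Z\perp\x^T$ the plane spanned by $\x^T$ and $Z$ has Gram determinant $|\x^T|^2|Z|^2-\<\x^T,Z\>^2=|\x^T|^2>0$, so $K(\x^T,Z)=\<R(\x^T,Z)Z,\x^T\>/|\x^T|^2=0$.

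I do not expect a genuine obstacle: each step is either a two-line computation or a direct appeal to Theorems~\ref{T:4.1} and~\ref{T:4.2}. The only points calling for a little care are making sure, in (a)--(b), that the $e_1$-curves are parametrized by arc length so that $\partial/\partial s=e_1$ exactly (not merely proportional to it), and, in (e), keeping the curvature-tensor sign and index conventions consistent when passing from $R(\,\cdot\,,\,\cdot\,)\x^T=0$ to $R(\x^T,\,\cdot\,)=0$.
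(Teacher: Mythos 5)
Your proposal is correct, and for items (a)--(d) and (f) it runs along the same lines as the paper: the paper simply notes that (a)--(d) were established in the proof of Theorem \ref{T:4.2} (via $A_{\x^N}=0$ from \e{4.4}, the normal equation $D_Z\x^N=-h(Z,\x^T)$, and $e_j\varphi=\delta_{1j}$ giving $\varphi=s+b$), which are exactly the computations you redo, and it likewise obtains (f) as an immediate consequence of (e). The genuine difference is in (e). The paper argues extrinsically: by the Gauss equation \e{2.6} and \e{4.8}, $R(\x^T,Y,Z;W)$ is rewritten in terms of $\<D_Z\x^N,h(Y,W)\>-\<D_W\x^N,h(Y,Z)\>$, and then the rectifying condition \e{4.4} together with the Codazzi equation \e{2.8} shows this reduces to $\<\x^N,(\bar\nabla_W h)(Y,Z)\>-\<\x^N,(\bar\nabla_Z h)(Y,W)\>=0$. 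You instead argue intrinsically: concurrency $\nabla_X\x^T=X$ and torsion-freeness give $R(X,Y)\x^T=0$ directly, and the pair symmetry $\<R(\x^T,Y)Z,W\>=\<R(Z,W)\x^T,Y\>$ then yields $R(\x^T,Y)=0$. Both are sound; your route is shorter and more general, since it shows (e) holds on any Riemannian manifold admitting a concurrent vector field (so for rectifying submanifolds it follows from Theorem \ref{T:4.1} alone), whereas the paper's Gauss--Codazzi computation stays within submanifold theory and exhibits how the rectifying condition itself forces the curvature identity, without passing through the concurrency characterization. Your cautionary remarks (arc-length normalization so that $\partial/\partial s=e_1$, and sign conventions in (e)) are well taken but pose no obstruction; the coordinate claim is exactly the one made in the proof of Theorem \ref{T:4.2}, and your conclusion in (e) is a vanishing statement, hence convention-independent.
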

\begin{proof} Statements (a), (b), (c) and (d)  are already done in the proof of Theorem \ref{T:4.2}.
Clearly, statement (f) follows immediately from statement (e).

Now, we prove statement (e). This can be done as follows. By applying \e{2.6} and \e{4.8} we have
\begin{equation}\begin{aligned}\label{5.1} R(\x^T,Y,Z;W)\, &=\<h(\x^T,W),h(Y,Z)\>-\<h(\x^T,Z),h(Y,W)\>
\\&=\<D_Z\x^N,h(Y,W)\>-\<D_W \x^N,h(Y,Z)\>
\\&=-\<\x^N,D_Z h(Y,W)\>+\<\x^N,D_W h(Y,Z)\>.\end{aligned}\end{equation}
Therefore, after applying \e{4.4} and equation \e{2.8} of Codazzi, we derive from \e{5.1} that
\begin{equation}\begin{aligned} R(\x^T,Y,Z;W)\, &=\<\x^N,(\bar\nabla_W h)(Y,Z)\>-\<\x^N,(\bar\nabla_Z h)(Y,W)\>=0,\end{aligned}\end{equation}
which gives statement (e). 
\end{proof}

\begin{remark} Statement (a), (b) and (c) of Theorem \ref{T:5.1} extend the corresponding results obtained in Theorem 1 of \cite{c6}.
\end{remark}

\begin{remark} One may define rectifying submanifolds in a pseudo-Euclidean space in the same as Definition \ref{D:2.1}.  We will treat rectifying submanifolds in pseudo-Euclidean spaces in a separate article.
\end{remark}

\end{document}